%
%
%
\documentclass[]{article}
\usepackage{amssymb,amsmath,amsthm}

\newcommand{\Z}{{\mathbf Z}}
\newcommand{\R}{\mathbf{R}}

\renewcommand{\P}{\mathrm{P}}
\newcommand {\E}{\mathrm{E}}
\newcommand{\1}{{\bf 1}}
\renewcommand{\d}{\text{\rm d}}
\newcommand{\e}{\text{\rm e}}

\newtheorem{stat}{Statement}[section]
\newtheorem{proposition}[stat]{Proposition}

\newtheorem{theorem}[stat]{Theorem}
\newtheorem{lemma}[stat]{Lemma}
\theoremstyle{definition} 
\newtheorem{remark}[stat]{Remark}

\numberwithin{equation}{section}

\begin{document}

\title{\bf Zeros of a two-parameter random walk%
	\thanks{%
	Research supported in part by NSF grant DMS-0704024.}}
	
\author{Davar Khoshnevisan\\University of Utah
\and P\'al R\'ev\'esz\\Technische Universit\"{a}t Wien}

\date{July 2, 2009}
\maketitle
\begin{abstract}
	We prove that the number $\gamma_N$ of the zeros of a two-parameter
	simple random walk in its first $N\times N$ time steps is almost surely
	equal to $N^{1+o(1)}$ as $N\to\infty$. This is in contrast with our earlier
	joint effort with Z. Shi \cite{KRS:05}; that work shows that the number of 
	zero crossings in the first $N\times N$ time steps is $N^{(3/2)+o(1)}$
	as $N\to\infty$.
	We prove also that the number of zeros on the diagonal in the first
	$N$ time steps is $((2\pi)^{-1/2}+o(1)) \log N$ almost surely.\\

	\noindent \vskip .2cm \noindent{\it Keywords:} Random walks, local time,
	random fields.\\
	
	\noindent{\it \noindent AMS 2000 subject classification:}
	Primary: 60G50; Secondary: 60G60, 60F15.
\end{abstract}

\section{Introduction}
Let $\{X_{i,j}\}_{i,j=1}^\infty$ denote i.i.d.\
random variables, taking the values $\pm 1$ with
respective probabilities $1/2$, and consider the
two-parameter random walk $\mathbf{S}:=\{S(n\,,m)\}_{n,m\ge 1}$ defined by
\begin{equation}
	S(n\,,m) := \sum_{i=1}^n\sum_{j=1}^m X_{i,j}
	\hskip.5in \text{for $n,m\ge 1$}.
\end{equation}

A lattice point $(i\,,j)$ is said to be a \emph{vertical crossing}
for the random walk $\mathbf{S}$ if $S(i\,,j)S(i\,,j+1)\le 0$. Let
$Z(N)$ denote the total number of vertical crossings in the
box $[1\,,N]^2\cap \mathbf{Z}^2$. 
A few years ago, together with Zhan Shi \cite{KRS:05} we proved
that with probability one,
\begin{equation}\label{eq:KRS:05}
	Z(N) = N^{(3/2)+o(1)}\qquad\text{as $N\to\infty$}.
\end{equation}
We used this result to describe an efficient method for plotting
the zero set of the two-parameter walk $S$; this was in turn
motivated by our desire to find good simulations of the level
sets of the Brownian sheet.

The goal of the present paper is to describe the rather different
asymptotic behavior of two other ``contour-plotting algorithms.''
Namely, we consider the total number of zeros in $[1\,,N]^2\cap\mathbf{Z}^2$:
\begin{equation}
	\gamma_N := \mathop{\sum\sum}\limits_{
	(i,j)\in[0,N]^2} \mathbf{1}_{\{S(i,j)=0\}},
\end{equation}
together with the total number of on-diagonal 
zeros in $[1\,,2N]^2\cap\mathbf{Z}^2$:
\begin{equation}
	\delta_N := \sum_{i=1}^N \mathbf{1}_{\{S(2i,2i)=0\}}.
\end{equation}
The main results are listed next.

\begin{theorem}\label{th:zeros}
	With probability one,
	\begin{equation}
		\gamma_N = N^{1+o(1)}\qquad
		\text{as $N\to\infty$}.
	\end{equation}
\end{theorem}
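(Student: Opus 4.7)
\emph{Upper bound.} For integers $i, j \geq 1$, $S(i,j)$ is a sum of $ij$ i.i.d.\ Rademacher variables, so the local central limit theorem gives $\P\{S(i,j) = 0\} \leq C(ij)^{-1/2}$, with the probability vanishing when $ij$ is odd. Summing, together with the trivial $O(N)$ axis contribution,
\begin{equation*}
	\E[\gamma_N] \leq O(N) + C \biggl( \sum_{i=1}^N i^{-1/2} \biggr)^{2} = O(N).
\end{equation*}
Fix $\epsilon > 0$. Markov's inequality yields $\P\{\gamma_N > N^{1+\epsilon}\} \leq CN^{-\epsilon}$, which is summable along the dyadic subsequence $N_k := 2^k$. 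Borel--Cantelli combined with the monotonicity of $N \mapsto \gamma_N$ then gives $\gamma_N \leq N^{1+2\epsilon}$ eventually a.s., and since $\epsilon$ was arbitrary, $\gamma_N \leq N^{1+o(1)}$ a.s.

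\emph{Lower bound.} Under the natural convention $S(0, j) = S(i, 0) = 0$ arising from the empty sum, the $2N+1$ lattice points on the axes of $[0,N]^2 \cap \mathbf{Z}^2$ are all zeros, so $\gamma_N \geq 2N+1$ surely and hence $\gamma_N \geq N^{1-o(1)}$. If instead a matching bound is wanted for the non-trivial interior count $\gamma_N^\circ := \sum_{i,j=1}^N \1_{\{S(i,j)=0\}}$, I would apply the second-moment method. The pivotal pairwise estimate, obtained by decomposing $[1,c]\times[1,d]$ into four disjoint sub-rectangles whose partial sums are mutually independent and invoking LCLT factorwise, is
\begin{equation*}
	\P\{S(a,b)=0,\ S(c,d)=0\} \leq \frac{C}{\sqrt{ab\,(cd-ab)}}, \qquad (a,b) \leq (c,d) \text{ componentwise,}
\end{equation*}
with an analogous expression when $a \leq c$ but $b > d$ (handled by a three-block decomposition). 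Routine summation yields $\E[(\gamma_N^\circ)^2] = O(N^2 (\log N)^K)$ for some fixed $K$, and Paley--Zygmund then gives $\P\{\gamma_N^\circ \geq cN\} \geq c'(\log N)^{-K}$.

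The main obstacle is upgrading this positive-probability statement to an almost-sure lower bound, since $(\log N)^{-K}$ is not summable along a geometric sequence. The natural strategy is to exploit that the row-increments $D_m(n) := \sum_{i=1}^n X_{i,m}$ are independent simple random walks in $n$ across distinct $m$, and to partition $[1,N]^2$ into mesoscopic row-strips whose zero counts are nearly decoupled; Paley--Zygmund within each strip together with a union bound across strips, aided by one-dimensional Chung-type LIL estimates for random-walk local times, should push the failure probability below an arbitrary negative power of $N$ and so deliver $\gamma_N^\circ \geq N^{1-\epsilon}$ eventually a.s.\ via Borel--Cantelli.
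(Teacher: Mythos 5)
Your upper bound matches the paper's argument essentially verbatim: bound $\E\gamma_N$ by $O(N)$ using the local CLT, apply Markov along the dyadic subsequence $2^k$, and interpolate by monotonicity. That half is fine.

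The lower bound is where the proposal falls short. First, the ``trivial axis contribution'' observation is a dodge: although the display defining $\gamma_N$ writes the index set as $[0,N]^2$, the paper's own computation $\E\gamma_N=\sum_{i=1}^N\sum_{j=1}^N\P\{S(i,j)=0\}$ makes clear the intended sum is over $[1,N]^2$ (and indeed $S$ is only defined for $n,m\ge 1$). So the genuine content is exactly the interior count $\gamma_N^\circ$ you then turn to. For that, you correctly set up the second moment and Paley--Zygmund, but as you yourself note, this only gives $\P\{\gamma_N^\circ\ge cN\}\gtrsim (\log N)^{-K}$, which is not summable along any geometric subsequence, so Borel--Cantelli does not close the argument. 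The proposed repair --- decoupling via mesoscopic row-strips plus one-dimensional local-time LIL estimates --- is a one-sentence heuristic with no estimate behind it; the strips are \emph{not} independent (they share the same first-coordinate partial sums), and it is not at all clear the failure probability can be pushed below a summable rate this way. This is precisely the hard part of the theorem, and it is left open.

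The paper's actual lower-bound argument is structurally different and worth contrasting. Rather than a second-moment argument on $\gamma_N$ directly, it leverages the earlier result \eqref{eq:KRS:05} that the number $Z(N)$ of vertical crossings satisfies $Z(N)=N^{(3/2)+o(1)}$ a.s. Writing $Z(N)=\sum_{i\le N}f(i;N)$ where $f(i;N)$ counts crossings in column $i$, and proving a uniform upper bound $\max_{i\le N}f(i;N)\le N^{(1/2)+o(1)}$ a.s.\ (Lemma \ref{lem:unif:hit}, via KMT coupling and Borodin's strong approximation of the number of sign changes by Brownian local time), a pigeonhole argument (Lemma \ref{lem:H}) shows that polynomially many columns $i\le N^{1-\alpha}$ each have $\gtrsim N^{(1/2)-\beta}$ crossings. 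Then a conditional Bernstein inequality (Proposition \ref{pr:Bernstein}, built on Lemma \ref{lem:LD}) shows that, at each crossing in column $i$, the walk actually hits zero with conditional probability $\gtrsim i^{-1/2}$, and that these successes concentrate with exponentially small failure probability. Multiplying the number of good columns by the number of guaranteed zeros per column gives $\gamma_N\ge N^{1-o(1)}$ a.s. The key point is that this route converts the problem into a one-parameter concentration problem along each column, where exponential bounds are available, sidestepping entirely the polylogarithmic loss that blocks a Paley--Zygmund approach.
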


\begin{theorem}\label{th:diag}
	With probability one,
	\begin{equation}
		\lim_{N\to\infty} \frac{\delta_N}{\log N} = \frac{1}{(2\pi)^{1/2}},
	\end{equation}
	where ``$\log$'' denotes the natural logarithm.
\end{theorem}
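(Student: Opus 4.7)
The plan is a second moment argument followed by a subsequence-plus-monotonicity step. First, since $S(2i,2i)$ is a sum of $4i^2$ i.i.d.\ Rademacher variables, Stirling's formula (equivalently, the local CLT) yields
\[
\P\{S(2i,2i)=0\} = \binom{4i^2}{2i^2}\,2^{-4i^2} = \frac{1}{i\sqrt{2\pi}}\,(1+o(1))
\]
as $i\to\infty$, and summing gives $\E[\delta_N] = (1+o(1))(\log N)/\sqrt{2\pi}$.

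For the variance I would exploit the decomposition $S(2j,2j) = S(2i,2i) + T_{i,j}$ for $i<j$, where $T_{i,j}$ is the sum of the $X_{a,b}$ over the L-shaped region $[1,2j]^2\setminus[1,2i]^2$ and is independent of $S(2i,2i)$ (a sum of $4(j^2-i^2)$ Rademachers). Conditioning on $\{S(2i,2i)=0\}$ turns $\{S(2j,2j)=0\}$ into $\{T_{i,j}=0\}$, whence
\[
\mathrm{Cov}\bigl(\mathbf{1}_{\{S(2i,2i)=0\}},\mathbf{1}_{\{S(2j,2j)=0\}}\bigr) = \P\{S(2i,2i)=0\}\bigl[\P\{T_{i,j}=0\} - \P\{S(2j,2j)=0\}\bigr].
\]
The crucial point is to keep the subtraction. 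The local CLT renders the bracket as $\tfrac{1}{\sqrt{2\pi}}\bigl[(j^2-i^2)^{-1/2} - j^{-1}\bigr]$ to leading order, which by Taylor expansion of $(1-i^2/j^2)^{-1/2}$ is $O(i^2/j^3)$ in the regime $i\le j/2$. For $j/2<i<j$, use the cruder bound $\mathrm{Cov}\le \P\{S(2i,2i)=0\}\P\{T_{i,j}=0\} = O\bigl(1/(j\sqrt{j^2-i^2})\bigr)$ and substitute $k=j-i$. Summing each regime over $i<j\le N$ gives $O(\log N)$, and adding the diagonal terms $\sum_i \mathrm{Var}(\mathbf{1}_{\{S(2i,2i)=0\}}) \le \E[\delta_N]$ yields $\mathrm{Var}(\delta_N) = O(\log N)$.

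Along the subsequence $N_k := \lfloor \exp(k^{1+\alpha})\rfloor$ for any fixed $\alpha>0$, Chebyshev's inequality gives
\[
\P\bigl\{|\delta_{N_k} - \E[\delta_{N_k}]| > \epsilon \log N_k\bigr\} \le \frac{\mathrm{Var}(\delta_{N_k})}{\epsilon^2(\log N_k)^2} = O\bigl(k^{-(1+\alpha)}\bigr),
\]
summable in $k$, so Borel--Cantelli gives $\delta_{N_k}/\log N_k \to 1/\sqrt{2\pi}$ almost surely. Since $N\mapsto\delta_N$ is non-decreasing and $\log N_{k+1}/\log N_k\to 1$, the sandwich
\[
\frac{\delta_{N_k}}{\log N_{k+1}} \le \frac{\delta_N}{\log N} \le \frac{\delta_{N_{k+1}}}{\log N_k} \qquad (N_k\le N\le N_{k+1})
\]
interpolates the conclusion to all $N$.

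The hard part is the variance estimate. The naive covariance bound $\mathrm{Cov}\le \P\{S(2i,2i)=0\}\P\{T_{i,j}=0\}$ applied uniformly over $i<j$ produces a double sum of order $(\log N)^2$, matching $\E[\delta_N]^2$, which is too large to drive any law of large numbers. The cancellation in the bracket $\P\{T_{i,j}=0\}-\P\{S(2j,2j)=0\}$, which reflects that $T_{i,j}$ and $S(2j,2j)$ are close in law when $i\ll j$, must be used quantitatively, and this calls for the local CLT with genuine error control rather than mere order of magnitude.
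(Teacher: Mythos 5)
Your proposal is correct and follows essentially the same route as the paper: compute $\E\delta_N$ by the local CLT, bound $\mathrm{Var}\,\delta_N = O(\log N)$ by a covariance decomposition through $S(2j,2j)=S(2i,2i)+T_{i,j}$ that exploits the cancellation $\P\{T_{i,j}=0\}-\P\{S(2j,2j)=0\}$, and then close with Chebyshev, Borel--Cantelli along a geometrically sparse sequence of $\log$'s, and monotonicity of $N\mapsto\delta_N$. The ``genuine error control'' you flag is supplied in the paper by a rigorous difference estimate $0\le p(n)-p(n+1)=O(n^{-3/2})$ (from the Fourier-inversion formula for $p(n)$ together with Wallis's expansion), after which the covariance bracket is handled by telescoping rather than by Taylor-expanding the local CLT; your choice of subsequence $\exp(k^{1+\alpha})$ even avoids the paper's extra limit $q\downarrow 1$.
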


The theorems are proved in reverse order of
difficulty, and in successive sections. 

\section{Proof of Theorem \ref{th:diag}}
Throughout, we need ordinary random-walk estimates. Therefore,
we use the following notation: 
Let $\{\xi_i\}_{i=1}^\infty$ be i.i.d.\ random variables,
taking the values $\pm 1$ with respective probabilities $1/2$,
and consider the one-parameter random walk $\mathbf{W}:=\{W_n\}_{n=1}^\infty$
defined by 
\begin{equation}\label{eq:W}
	W_n := \xi_1+\cdots+\xi_n.
\end{equation}
We begin by proving a simpler result.

\begin{lemma}\label{EN}
	As $N\to\infty$,\footnote{We always write $a_N=O(1)$
	to mean that $\sup_N |a_N|<\infty.$ Note the
	absolute values.}
	\begin{equation}
		\E \delta_N = \frac{1}{ (2\pi)^{1/2}}\log N +O(1).
	\end{equation}
\end{lemma}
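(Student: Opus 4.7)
The plan is to start from linearity of expectation, which gives
\begin{equation*}
	\E \delta_N = \sum_{i=1}^N \P\{S(2i,2i)=0\}.
\end{equation*}
The key observation is that $S(2i,2i)$ is a sum of $4i^2$ i.i.d.\ $\pm 1$ random variables (Rademachers), so it has the same distribution as $W_{4i^2}$, where $\mathbf{W}$ is the one-parameter walk defined in \eqref{eq:W}. Hence $\P\{S(2i,2i)=0\}=\P\{W_{4i^2}=0\}=\binom{4i^2}{2i^2}2^{-4i^2}$.

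Next I would apply Stirling's formula to the central binomial coefficient. The standard asymptotic $\binom{2n}{n}2^{-2n}=(\pi n)^{-1/2}(1+O(1/n))$, specialized to $n=2i^2$, yields
\begin{equation*}
	\P\{W_{4i^2}=0\} = \frac{1}{\sqrt{2\pi}\,i}+O\!\left(\frac{1}{i^3}\right).
\end{equation*}

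Finally I would sum this identity from $i=1$ to $N$. The $O(i^{-3})$ remainder is summable and therefore contributes only $O(1)$. For the main term, the harmonic sum yields $\sum_{i=1}^N i^{-1}=\log N+\gamma+O(1/N)$, and the constant $\gamma$ gets absorbed into $O(1)$. This gives
\begin{equation*}
	\E \delta_N = \frac{1}{\sqrt{2\pi}}\log N + O(1),
\end{equation*}
as claimed.

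There is no real obstacle here; the only thing to watch is that Stirling be applied with enough precision to have a summable remainder (a crude $o(1/i)$ bound would not suffice to conclude the $O(1)$ error). Using the standard two-term expansion of the central binomial coefficient is more than enough. The result is a baseline computation that will presumably be coupled later with a second-moment/variance estimate to upgrade it to the almost-sure statement of Theorem~\ref{th:diag}.
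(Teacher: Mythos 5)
Your proposal is correct and follows essentially the same route as the paper: identify $S(2i,2i)$ with $W_{4i^2}$, expand the return probability asymptotically, and sum. The only cosmetic difference is that you derive the asymptotic $(\pi n)^{-1/2}(1+O(1/n))$ for $\P\{W_{2n}=0\}$ via Stirling applied to the central binomial coefficient, whereas the paper quotes it from the Fourier-inversion formula together with Wallis's expansion \eqref{p}; both give the same two-term estimate and the identical conclusion.
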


Before we prove this, we recall some facts about simple random walks.

We are interested in the function,
\begin{equation}\label{eq:p}
	p(n) := \P\{ W_{2n}=0\}.
\end{equation}
First of all, we have the following, which is
a consequence of the inversion formula for Fourier transforms:
\begin{equation}\label{inversion}
	p(n) = \frac{1}{2\pi} \int_{-\pi}^\pi
	\left[ \cos (t) \right]^{2n}\, dt.
\end{equation}
Therefore, according to Wallis' formula \cite[eq.\ \textbf{6.1.49}, p.\ 258]{AS},
as $n\to\infty$,
\begin{equation}\label{p}
	p(n) = \frac{1}{(\pi n)^{1/2}} \left[
	1 - \frac{1}{8n} + \frac{1}{128 n^2} 
	-\cdots\right],
\end{equation}
in the sense of formal
power series.\footnote{Suppose $a_1,a_2,\ldots$
are non-negative series which $a_1(n)\le a_2(n)\le\cdots$.
Then please recall that
``$p(n)=a_1(n) - a_2(n) + a_3(n) - \cdots$'' is short-hand 
for ``$a_1(n)-a_2(n) \le p(n) \le 
a_1(n)-a_2(n)+a_3(n)$,'' etc.}

Next, we present a ``difference estimate.''

\begin{lemma}\label{diff}
	For all integers $n\ge 1$,
	\begin{equation}
		0\le p(n) - p(n+1) = O(n^{-3/2}).
	\end{equation}
\end{lemma}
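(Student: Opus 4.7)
The plan is to reduce everything to an exact identity for the ratio $p(n+1)/p(n)$, after which both the non-negativity and the $O(n^{-3/2})$ bound fall out immediately from \eqref{p}.

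First, I would write $p(n)$ in closed form as $p(n)=\binom{2n}{n}\,2^{-2n}$, which is standard for the simple random walk. A direct manipulation of binomial coefficients then yields
\begin{equation*}
\frac{p(n+1)}{p(n)}=\frac{1}{4}\cdot\frac{(2n+1)(2n+2)}{(n+1)^2}=\frac{2n+1}{2n+2}.
\end{equation*}
This rearranges to the key identity
\begin{equation*}
p(n)-p(n+1)=\frac{p(n)}{2(n+1)}.
\end{equation*}

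Non-negativity of $p(n)-p(n+1)$ is now immediate, since $p(n)>0$. For the upper estimate, I would invoke the Wallis-type asymptotic \eqref{p}, which gives $p(n)=O(n^{-1/2})$. Dividing by $2(n+1)$ then produces the claimed $O(n^{-3/2})$ bound.

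There is no real obstacle here: the only thing to check is the arithmetic leading to the ratio $(2n+1)/(2n+2)$, which is routine. (For the reader who prefers a Fourier-analytic derivation, an equivalent route is to observe from \eqref{inversion} that $p(n)-p(n+1)=(2\pi)^{-1}\int_{-\pi}^\pi \cos^{2n}(t)\sin^2(t)\,dt$, which is manifestly non-negative, and to estimate the integral by the change of variables $u=t\sqrt n$ together with the Gaussian-type envelope $\cos^{2n}(t)\le e^{-nt^2/2}$ for $|t|\le \pi$; but the binomial-ratio approach is much shorter and sharper.)
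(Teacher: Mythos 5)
Your proof is correct, and it takes a genuinely different route from the paper's. The paper establishes monotonicity ($p(n)\ge p(n+1)$) from the Fourier integral \eqref{inversion}, using the trivial bound $0\le\cos^2 t\le 1$, and then obtains the $O(n^{-3/2})$ estimate by differencing the asymptotic expansion \eqref{p} directly. You instead start from the closed form $p(n)=\binom{2n}{n}2^{-2n}$ and derive the exact recurrence $p(n)-p(n+1)=p(n)/(2(n+1))$, from which both non-negativity and the $O(n^{-3/2})$ bound drop out at once (the latter still needing $p(n)=O(n^{-1/2})$ from \eqref{p}, as you note). Your identity is sharper and tidier: it replaces the paper's ``few lines of computation'' with a one-line exact relation, and it makes the monotonicity argument elementary rather than Fourier-analytic. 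The paper's approach has the mild advantage of not depending on the combinatorial closed form, which keeps the argument uniform with the rest of the section where \eqref{inversion} and \eqref{p} are the tools of choice; but for this lemma in isolation, your route is the cleaner one.
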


\begin{proof}
	Because $0\le \cos^2 t\le 1$,
	\eqref{inversion} implies that
	$p(n)\ge p(n+1)$.  The remainder follows
	from \eqref{p} and a few lines of computations.
\end{proof}

\begin{proof}[Proof of Lemma \ref{EN}]
	Because $S(2i\,,2i)$ has the same
	distribution as $W_{4i^2}$, it follows that
	$\E\delta_N=\sum_{1\le i\le N} p(2i^2)$.
	The result follows readily from this and \eqref{p}.
\end{proof}

Next, we bound the variance of $\delta_N$.

\begin{proposition}\label{VarN}
	As $N\to\infty$,
	\begin{equation}
		\text{\rm Var}\, \delta_N = \frac{1}{(2\pi)^{1/2}} \log N + O(1).
	\end{equation}
\end{proposition}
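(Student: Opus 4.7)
The plan is to split the variance into diagonal and off-diagonal parts,
\begin{equation*}
	\text{Var}\,\delta_N = \sum_{i=1}^N p(2i^2)\bigl(1-p(2i^2)\bigr)
		+ 2\!\!\sum_{1 \le i < j \le N}\!\!\text{Cov}_{ij},
\end{equation*}
where $\text{Cov}_{ij} := \P\{S(2i,2i)=0,\,S(2j,2j)=0\} - p(2i^2)p(2j^2)$. By Lemma~\ref{EN} together with the bound $\sum_i p(2i^2)^2 = \sum_i O(i^{-2}) = O(1)$ that follows from \eqref{p}, the diagonal sum already equals $(2\pi)^{-1/2}\log N + O(1)$. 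The proof therefore reduces to showing $\sum_{i<j}\text{Cov}_{ij} = O(1)$.

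The first real step is to compute $\text{Cov}_{ij}$ exactly by exploiting the product structure of the two-parameter walk. For $i<j$ one has the orthogonal-increment decomposition $S(2j,2j) = S(2i,2i) + L_{i,j}$, where $L_{i,j}$ sums the $X_{a,b}$ over the ``L-shape'' $[1,2j]^2 \setminus [1,2i]^2$. Since this L-shape is disjoint from $[1,2i]^2$, the increment $L_{i,j}$ is independent of $S(2i,2i)$; and since it contains exactly $4(j^2-i^2)$ lattice sites, $L_{i,j}$ has the same distribution as $W_{4(j^2-i^2)}$. Consequently $\P\{S(2i,2i)=0,\,S(2j,2j)=0\} = p(2i^2)\,p(2(j^2-i^2))$, so
\begin{equation*}
	\text{Cov}_{ij} = p(2i^2)\bigl[p(2(j^2-i^2)) - p(2j^2)\bigr],
\end{equation*}
which is nonnegative by the monotonicity of $p$ (Lemma~\ref{diff}).

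The main obstacle is controlling $\sum_{i<j}\text{Cov}_{ij}$. A direct telescoping via Lemma~\ref{diff} gives $p(2(j^2-i^2)) - p(2j^2) = O((j^2-i^2)^{-1/2})$ and hence $\text{Cov}_{ij} = O\bigl(1/(i\sqrt{j^2-i^2})\bigr)$, but this is too crude to sum directly. To sharpen, I would split the index set into a near-diagonal regime $i>j/2$ and a bulk regime $i\le j/2$. In the near-diagonal regime, parametrizing by $v=j-i$ and using $\sqrt{j^2-i^2}\ge\sqrt{jv}$ together with $p(2i^2)=O(1/j)$ yields $\text{Cov}_{ij}=O(j^{-3/2}v^{-1/2})$. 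In the bulk regime, where the telescoping bound is weakest, one uses the sharper Wallis expansion $p(2n) = (\pi n)^{-1/2}(1+O(n^{-1}))$ from \eqref{p} together with the Taylor expansion $(j^2-i^2)^{-1/2}-j^{-1} = i^2/(2j^3) + O(i^4/j^5)$, obtaining the refined $\text{Cov}_{ij} = O(i/j^3)$. The delicate final step is verifying that summing these refined bounds across the two regimes yields only a bounded contribution; this is the crux of the proof and where any remaining cancellation between the two probabilities must be squeezed out, since the cruder estimates only barely fail to be summable.
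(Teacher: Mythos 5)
Your setup is correct and matches the paper's: the decomposition $\E[\delta_N^2]=\E\delta_N+2\sum_{i<j}P(i,j)$, the exact evaluation $P(i,j)=p(2i^2)\,p(2(j^2-i^2))$ via the $L$-shaped increment, and the consequent identity $\text{Cov}_{ij}=p(2i^2)\bigl[p(2(j^2-i^2))-p(2j^2)\bigr]\ge 0$ are all right. But the proof is explicitly left unfinished at the decisive point, and the plan you describe cannot close. Your refined bounds are
$\text{Cov}_{ij}=O\bigl(j^{-3/2}(j-i)^{-1/2}\bigr)$ in the near-diagonal regime $i>j/2$ and $\text{Cov}_{ij}=O\bigl(i/j^{3}\bigr)$ in the bulk $i\le j/2$; both are correct as bounds, but neither sums to $O(1)$. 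In the bulk,
$\sum_{j\le N}\sum_{i\le j/2}\frac{i}{j^3}\asymp\sum_{j\le N}\frac{1}{j}\asymp\log N$,
and in the near-diagonal, with $v=j-i$,
$\sum_{j\le N}\sum_{1\le v\le j/2}\frac{1}{j^{3/2}v^{1/2}}\asymp\sum_{j\le N}\frac{1}{j}\asymp\log N$.
So each regime contributes $\Theta(\log N)$, not $O(1)$.

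Moreover, there is no residual ``cancellation between the two probabilities'' left to squeeze: the Wallis expansion already gives the sharp asymptotics
$\text{Cov}_{ij}\sim\frac{1}{2\pi i}\Bigl[(j^2-i^2)^{-1/2}-j^{-1}\Bigr]$,
which is exactly what your refined bounds encode, and since every $\text{Cov}_{ij}$ is nonnegative you also cannot exploit sign cancellation across $(i,j)$ pairs. Summing the sharp asymptotics gives
$\sum_{i<j\le N}\text{Cov}_{ij}\sim\frac{\log 2}{2\pi}\,\log N$,
which is genuinely of order $\log N$. (For comparison, the paper's own proof runs into the same obstacle: the passage in \eqref{f:diff} from $i^2/(j^2-i^2)^{3/2}$ to $i^2/\bigl(j^3(j-i)^{3/2}\bigr)$ requires $(j+i)^{3/2}\gtrsim j^3$, which is false; the honest bound has $j^{3/2}$ in place of $j^3$, and then \eqref{ssf} no longer converges.) What \emph{is} easy to show with your estimates is $\text{Var}\,\delta_N=O(\log N)$, which suffices for the Borel--Cantelli/Chebyshev argument used to prove Theorem~\ref{th:diag}; but the exact leading constant $(2\pi)^{-1/2}$ in Proposition~\ref{VarN} is not established by your argument, and the step you defer is not merely ``delicate'' but appears to fail.
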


\begin{proof}
	Evidently,
	\begin{equation}\label{Edelta^2}
		\E[\delta_N^2] = \E \delta_N + 2\mathop{
		\sum\sum}_{1\le i<j\le N} P(i\,,j),
	\end{equation}
	where
	\begin{equation}
		P(i\,,j) := \P \left\{ S(2i\,,2i)=0 ~,~ S(2j\,,2j)=0\right\},
	\end{equation}
	for $1\le i<j<\infty$.
	But $S(2j\,,2j)=S(2i\,,2i)+W_{i,j}$, where $W_{i,j}$ is a sum of
	$4(j^2-i^2)$-many i.i.d.\ Rademacher variables, and is
	independent of $S(2i\,,2i)$. Therefore, 
	\begin{equation}\label{Pij}
		P(i\,,j) = p(2i^2) p \left( 2(j^2-i^2)\right).
	\end{equation}
	According to Lemma \ref{diff}, $P(i\,,j)\ge
	p(2i^2)p(2j^2)$. Therefore, by \eqref{Edelta^2},
	\begin{equation}\begin{split}
		\E[\delta_N^2] &\ge \E \delta_N + 2\mathop{\sum\sum}_{
			1\le i<j\le N} p(2i^2)p(2j^2)\\
		&= \E \delta_N + \left( \E \delta_N \right)^2 -
			\sum_{1\le i\le N} p^2(2i^2).
	\end{split}\end{equation}
	Thanks to \eqref{p}, the final sum is $O(1)$. Therefore,
	Lemma \ref{EN} implies that
	\begin{equation}\label{Var:LB}
		\text{Var}\,\delta_N  \ge \frac{1}{(2\pi)^{1/2}}
		\log N + O(1).
	\end{equation}
	
	In order to bound the converse bound, we use Lemma \ref{diff} to find that
	\begin{equation}\begin{split}
		p\left( 2(j^2-i^2)\right) - p(2j^2)
			&= \sum_{2(j^2-i^2)\le \ell <
			2j^2} \left[  p(\ell)- p(\ell+1) \right]\\
		&\le c\sum_{2(j^2-i^2) \le \ell <
			2j^2} \frac{1}{\ell^{3/2}},
	\end{split}\end{equation}
	where $c$ is positive and finite, and does not depend on $(i\,,j)$.
	From this we can deduce that
	\begin{equation}\label{f:diff}\begin{split}
		p\left( 2(j^2-i^2)\right) - p(2j^2)
			&\le c'\frac{i^2}{(j^2-i^2)^{3/2}}\\
		&\le c'' \frac{i^2}{j^3(j-i)^{3/2}},
	\end{split}\end{equation}
	where $c'$ and $c''$ are positive and finite, and do not depend on $(i\,,j)$.
	Thus, we can find a positive and finite constant $c'''$ such that for
	all $n\ge 1$,
	\begin{equation}\label{ssf}
		\mathop{\sum\sum}_{1\le i<j\le N} p(2i^2)
		\left[ p\left( 2(j^2-i^2)\right) - p(2j^2)\right]
		\le c'''\mathop{\sum\sum}_{1\le i<j\le N}
		\frac{i}{j^3(j-i)^{3/2}}.
	\end{equation}
	We split the sum according to whether or not $j<2i$.
	First, we note that
	\begin{equation}
		\mathop{\sum\sum}_{\substack{
		1\le i,j<\infty\\i<j<2i}}
		\frac{i}{j^3(j-i)^{3/2}}
		\le \sum_{1\le i<\infty} \frac{1}{i^2}
		\sum_{i<j<\infty}\frac{1}{(j-i)^{3/2}}<\infty.
	\end{equation}
	Next, we note that
	\begin{equation}
		\mathop{\sum\sum}_{1\le i<2i<j<\infty}
		\frac{i}{j^3(j-i)^{3/2}} \le 2^{3/2}
		\mathop{\sum\sum}_{1\le i<2i<j<\infty}
		\frac{i}{j^{9/2}}<\infty.
	\end{equation}
	This and \eqref{ssf} together prove that
	\begin{equation}\begin{split}
		\mathop{\sum\sum}_{1\le i<j\le N}
			P(i\,,j) & \le
			\mathop{\sum\sum}_{1\le i<j\le N}
			p(2i^2) p(2j^2) + O(1)\\
		&= \left( \E \delta_N \right)^2 - \sum_{1\le i\le N}
			p^2(2i^2) + O(1)\\
		&= \left( \E \delta_N \right)^2 + O(1).
	\end{split}\end{equation}
	See \eqref{p}. This and \eqref{Edelta^2} together imply
	that the variance of $\delta_N$ is at most
	$O(1)$ + $\E \delta_N$. Apply Lemma \eqref{EN}
	and \eqref{Var:LB}, in conjunction, to finish the proof.
\end{proof}

\begin{proof}[Proof of Theorem \ref{th:diag}]
	Thanks to Proposition \ref{VarN}
	and the Chebyshev inequality, we can write the following:
	For all $\epsilon>0$,
	\begin{equation}
		\P\left\{ \left| \delta_N - \E\delta_N\right|\ge\epsilon \log N\right\}
		=O\left( \frac{1}{\log N}\right).
	\end{equation}
	Set $n_k:=[\exp(q^k)]$ for an arbitrary but fixed 
	$q>1$, and apply the Borel--Cantelli lemma to deduce that
	\begin{equation}
		\lim_{k\to\infty} \frac{\delta_{n_k}}{\log n_k}=
		\frac{1}{(2\pi)^{1/2}}.
	\end{equation}
	Let $m\to\infty$ and find $k=k(m)$ such that
	$n_k\le m<n_{k+1}$. Evidently,
	$\delta_{n_k}\le \delta_m\le \delta_{n_{k+1}}$. Also,
	$\log n_k\le \log n_{k+1} = (q+o(1))\log n_k$.
	Therefore, a.s.,
	\begin{equation}
		\limsup_{m\to\infty} \frac{\delta_m}{\log m}
		\le \limsup_{k\to\infty} \frac{\delta_{n_{k+1}}}{
		\log n_k} = \frac{q}{(2\pi)^{1/2}}.
	\end{equation}
	Similarly, a.s.,
	\begin{equation}
		\liminf_{m\to\infty} \frac{\delta_m}{\log m}
		\ge \liminf_{k\to\infty} \frac{\delta_{n_k}}{\log n_{k+1}}
		\ge \frac {1}{q(2\pi)^{1/2}}.
	\end{equation}
	Let $q\downarrow 1$ to finish.
\end{proof}

\section{Proof of Theorem \ref{th:zeros}}

We begin by proving the easier half of Theorem \ref{th:zeros};
namely, we first prove that with probability one,
$\gamma_N \le N^{1+o(1)}$.

\begin{proof}[Proof of Theorem \ref{th:zeros}: First Half]
	We apply \eqref{p} to deduce that as $N\to\infty$,
	\begin{equation}
		\E\gamma_N = \sum_{i=1}^N\sum_{j=1}^N \P\{S(i\,,j)=0\}
		=\sum_{i=1}^N\sum_{j=1}^N p(ij/2)
		\le \text{const}\cdot\left(\sum_{i=1}^N i^{-1/2}\right)^2,
	\end{equation}
	and this is $\le \text{const}\cdot N$.
	By Markov's inequality, 
	\begin{equation}
		\P\{\gamma_N\ge N^{1+\epsilon}\} \le \text{const}\cdot N^{-\epsilon},
	\end{equation}
	where the implied constant is independent of $\epsilon>0$ and $N\ge 1$.
	Replace $N$ by $2^k$ and apply the Borel--Cantelli lemma to deduce that
	with probability one, $\gamma_{2^k}< 2^{k(1+\epsilon)}$ for all $k$ sufficiently large.
	If $2^k\le N\le 2^{k+1}$ is sufficiently large [how large might be random], 
	then a.s.,
	\begin{equation}
		\gamma_N\le \gamma_{2^{k+1}} <2^{(k+1)(1+\epsilon)}
		\le 2^{k(1+2\epsilon)}\le N^{1+2\epsilon}.
	\end{equation}
	Since $\epsilon>0$ is arbitrary, this proves half of the theorem.
\end{proof}

The proof of the converse half is more delicate, and requires some
preliminary estimates.

For all $i\ge 1$ define
\begin{equation}\begin{split}
	\rho_1(i) &:= \min\left\{ j\ge 1:\ S(i\,,j)S(i\,,j+1)\le 0\right\},\\
	\rho_2(i) &:=\min\left\{ j\ge \rho_1(i):\ S(i\,,j)S(i\,,j+1)\le 0\right\},\\
	&\vdots\\
	\rho_\ell(i) &:=\min\left\{ j\ge\rho_{\ell-1}(i):\ S(i\,,j)S(i\,,j+1)\le 0\right\},\ldots\,.
\end{split}\end{equation}
These are the successive times of ``vertical upcrossings over time-level $i$.''
For all integers $i\ge 1$ and all real numbers $t\ge 1$,
let us consider
\begin{equation}
	f(i\,;t) := \max\left\{ k\ge 1:\ \rho_k(i) \le t\right\}.
\end{equation}
Then, it should be clear that
\begin{equation}\label{fZ}
	\sum_{i=1}^N f(i\,;N) = Z(N).
\end{equation}
where $Z(N)$ denotes the total number of vertical upcrossings in $[1\,,N]^2$;
see the introduction.

\begin{lemma}\label{lem:unif:hit}
	With probability one, if $N$ is large enough, then
	\begin{equation}
		\max_{1\le i\le N}
		f(i\,;N) \le N^{1/2+o(1)}.
	\end{equation}
\end{lemma}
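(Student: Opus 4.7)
The plan is to establish, for each fixed $i$ and every $\epsilon>0$, a tail bound on $f(i\,;N)$ strong enough to survive a union bound over $1\le i\le N$, and then to conclude by a routine Borel--Cantelli argument along $N=2^k$.

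For each fixed $i$, the process $j\mapsto S(i\,,j)$ is a mean-zero random walk whose increments are i.i.d.\ copies of $W_i$; equivalently, after relabelling one has $S(i\,,j)=W_{ij}$, with $W$ the one-parameter SRW of \eqref{eq:W}. Using this representation, the strong Markov property of $W$, and standard local-CLT/gambler's-ruin estimates, I would prove that for every integer $q\ge 1$ there is a constant $C_q<\infty$, \emph{independent of $i$ and $N$}, such that
\begin{equation}
\E\!\left[f(i\,;N)^q\right]\le C_q\,N^{q/2}.
\end{equation}
Writing $f(i\,;N)=\sum_{j=1}^{N-1}\mathbf{1}_{E_j}$ with $E_j:=\{S(i\,,j)S(i\,,j+1)\le 0\}$, the core of the argument is the joint bound
\begin{equation}
\P(E_{j_1}\cap\cdots\cap E_{j_q})\le \frac{C^q}{\sqrt{j_1(j_2-j_1)\cdots(j_q-j_{q-1})}}\qquad(1\le j_1<\cdots<j_q),
\end{equation}
obtained by conditioning successively on the walk's position just after each crossing; the factors of $i$ cancel because both the step-size of $S(i\,,\cdot)$ and the relevant window around $0$ scale like $\sqrt i$. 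Summing over ordered tuples in $[1,N]^q$ then yields the desired moment bound.

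Given this, Markov's inequality gives $\P\{f(i\,;N)>N^{1/2+\epsilon}\}\le C_q N^{-q\epsilon}$, whence a union bound over $i$ yields
\begin{equation}
\P\!\left\{\max_{1\le i\le N}f(i\,;N)>N^{1/2+\epsilon}\right\}\le C_q\,N^{\,1-q\epsilon}.
\end{equation}
Choosing $q>1/\epsilon$ makes the right-hand side summable along $N=2^k$, so the Borel--Cantelli lemma gives $\max_{i\le 2^k}f(i\,;2^k)\le 2^{k(1/2+\epsilon)}$ eventually, a.s. The monotonicity of $f(i\,;N)$ in $N$ handles intermediate values (as in the proof of the first half of Theorem \ref{th:zeros}), and intersecting over a sequence $\epsilon_n\downarrow 0$ produces the stated $N^{1/2+o(1)}$ bound.

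The main obstacle is the moment bound. One has to verify that immediately after a crossing the conditional law of $S(i\,,j+1)$ is concentrated on the scale $\sqrt i$, so that the successive local-CLT estimates give a product upper bound \emph{uniformly in $i$}. Any residual $i$-dependence in the leading constant would be fatal to the union bound over $i\in[1,N]$, which is why the cancellation of all $\sqrt i$ factors lies at the heart of the argument.
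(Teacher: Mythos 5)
Your proposal is correct but proceeds along a genuinely different route from the paper's. The paper proves this lemma via a chain of strong approximations: the Koml\'os--Major--Tusn\'ady embedding couples $S(i\,,\cdot)$ to a Brownian motion $\mathbf{w}$, Borodin's theorem couples the discrete downcrossing count of $\mathbf{w}$ (sampled at mesh $i$) to a Brownian local time $L_1^0(\mathbf{B})$, and the Gaussian tail $\P\{L_1^0(\mathbf{B})\ge\lambda\}\le 2\e^{-\lambda^2/2}$ yields a superpolynomially small bound on $\P\{f(i\,;N)\ge 2N^{(1/2)+\delta}\}$ uniformly in $i$; a union bound over $i\le N$ and Borel--Cantelli then finish. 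You instead prove polynomial moment bounds $\E[f(i\,;N)^q]\le C_q N^{q/2}$ directly from local-CLT estimates and then take $q$ large enough to beat the union bound. The point you single out as the crux is exactly right: the crossing event forces $|S(i\,,j_m)|$ into a window of width $O(\sqrt i)$, and this $\sqrt i$ cancels against the $\sqrt i$ in the local-CLT bound $\sup_z\P\{W_{in}=z\}=O((in)^{-1/2})$, so that the conditional probability of $E_{j_m}$ given the walk through time $j_{m-1}+1$ is at most $C(j_m-j_{m-1})^{-1/2}$ with $C$ free of $i$. Summing the resulting product bound over ordered $q$-tuples after the substitution $k_m:=j_m-j_{m-1}$ indeed gives $C_q N^{q/2}$, and the rest of your argument (Markov with $q>1/\epsilon$, dyadic Borel--Cantelli, monotonicity of $f(i\,;\cdot)$) is routine. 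Your method is more elementary and self-contained, bypassing the coupling machinery entirely; the paper's method reuses the KMT/Borodin toolkit it already imports from \cite{KRS:05}, and is better positioned to yield the matching lower bound $f(1\,;N)=N^{(1/2)+o(1)}$ mentioned in the Remark after the lemma, which a one-sided moment estimate cannot deliver.
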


\begin{remark}
	It is possible to improve the ``$\le$'' to an equality.
	In fact, one can prove that
	$f(1\,;N)=N^{1/2+o(1)}$ a.s., using the results of
	Borodin \cite{Borodin}; for further related results
	see \cite{CsorgoRevesz:85}. We will
	not prove this more general assertion, as we shall not
	need it in the sequel.
	\qed
\end{remark}

\begin{proof}
	Choose and fix two integers $N\ge 1$ and $i\in\{1\,,\ldots,N\}$.
	
	We plan to apply estimates from the proof of Proposition
	4.2 of \cite{KRS:05}, whose $\zeta_i(0\,,N)$
	is the present $f(i\,;N)$.
	
	After Koml\'os, Major, and Tusn\'ady \cite{KMT},
	we can---after a possible enlargement of the underlying
	probability space---find three finite and positive constants
	$c_1,c_2,c_3$ and construct a standard Brownian motion
	$\mathbf{w}:=\{w(t)\}_{t\ge 0}$ such that
	for all $z>0$,
	\begin{equation}\label{eq:KMT}
		\max_{1\le j\le N}
		\P\left\{ \left| S(i\,,j)-w(ij) \right|>c_1\log(ij)+z\right\}\le
		c_2\e^{-c_3 z}.
	\end{equation}
	The Brownian motion $\mathbf{w}$ depends on the fixed
	constant $i$, but we are interested only in its law, which is
	of course independent of $i$. In addition, the constants
	$c_1,c_2,c_3$ are universal.
	
	Fix $\epsilon\in(0\,,1/2)$ and $\delta\in(0\,,\epsilon/2)$,
	and consider the event
	\begin{equation}
		\mathcal{E}_N :=\left\{ \max_{1\le j\le N}\left| 
		S(i\,,j)-w(ij)\right| \le N^\delta\right\}.
	\end{equation}
	[We are suppressing the dependence of $\mathcal{E}_N$
	on $i$, as $i$ is fixed.]
	By \eqref{eq:KMT}, we can find a constant $c_4$---independent
	of $N$ and $i$---such that
	\begin{equation}\label{eq:P(E)}
		\P(\mathcal{E}_N) \ge 1 - c_4N^{-4}.
	\end{equation}
	Let $S(i\,,0):=0$ for all $i$. Then, almost surely on $\mathcal{E}_N$, we have
	\begin{equation}\label{eq:ub0}\begin{split}
		&\sum_{j=0}^{N-1} \1_{\{ S(i,j)\ge 0 ~,~ S(i,j+1)\le 0\}}\\
		&\hskip.6in\le \sum_{j=0}^{N-1} \1_{\{ w(ij)\ge -N^\delta~,~
			w(i(j+1))\le N^\delta\}}\\
		&\hskip.6in\le \sum_{j=0}^{N-1} \1_{\{w(ij)\ge 0~,~w(i(j+1))\le 0\}}
			+2\sup_{a\in\R}\sum_{j=0}^N\1_{\{a\le w(ij)\le a+N^\delta\}}.
	\end{split}\end{equation}
	This is equation (6.6) of \cite{KRS:05}.
	Now we use eq.\ (1.13) of Borodin \cite{Borodin} to couple $\mathbf{w}$ with another
	Brownian motion $\mathbf{B}:=\{B(t)\}_{t\ge 0}$ such that
	\begin{equation}\label{eq:Borodin}\begin{split}
		&\P\left\{ \left| \sum_{j=0}^{N-1}\1_{\{w(ij)\ge 0~,~
			w(i(j+1))\le 0\}} - \mu (N/i)^{1/2} L_1^0(B)\right|
			\ge c_5 N^{1/4}\log N\right\}\\
		&\hskip3.4in\le (c_5 N)^{-4},
	\end{split}\end{equation}
	where $\mu:=\E(\lfloor B^+(1)\rfloor)$, $c_5\in(0\,,1)$
	does not depend on $(i\,,N)$, and
	$L_1^0(\mathbf{B}) := \lim_{\eta\downarrow 0} (2\eta)^{-1}
	\int_0^1 \1_{\{|B(s)|\le \eta\}}\, \d s$
	denotes the local time of $\mathbf{B}$ at time $1$ at space value $0$. See also the derivation
	of \cite[eq.\ (6.10)]{KRS:05} for some detailed technical comments.
	
	It is well known that $\P\{L_1^0(\mathbf{B})\ge\lambda\}
	\le 2\e^{-\lambda^2/2}$ for all $\lambda>0$ \cite{Lacey}. In particular,
	$\P \{ L_1^0(\mathbf{B}) \ge N^\delta \}
	\le 2\exp(-N^\delta/2).$ Since $\delta<1/4$, this,
	\eqref{eq:P(E)}, and \eqref{eq:Borodin}
	together imply that
	\begin{equation}\label{eq:ub1}
		\P\left\{ \sum_{j=0}^{N-1}\1_{\{w(ij)\ge 0~,~ w(i(j+1))\le 0\}}
		\ge \frac{N^{(1/2)+\delta}}{i^{1/2}} \right\} \le c_6 N^{-4},
	\end{equation}
	where $c_6\in(1\,,\infty)$ is independent of $N$ and $i$.
	On the other hand, eq.\ (6.20) of \cite{KRS:05} tells us that
	we can find a constant $c_7\in(1\,,\infty)$---independent of
	$N$ and $i$---such that
	\begin{equation}
		\P\left\{ 2\sup_{a\in\R}\sum_{j=0}^N\1_{\{a\le w(ij)\le a+N^\delta\}}
		\ge \frac{N^{(1/2)+\delta}}{i^{1/2}} \right\}
		 \le c_7 N^{-4} + 2\exp\left(-N^{2\delta}\right).
	\end{equation}
	Since $i\ge 1$ and $\delta<1/4< 1/2$, this implies that
	\begin{equation}\label{eq:ub2}
		\P\left\{ 2\sup_{a\in\R}\sum_{j=0}^N\1_{\{a\le w(ij)\le a+N^\delta\}}
		\ge N^{(1/2)+\delta} \right\}
		\le c_7 N^{-4} + 2\exp\left(-N^{2\delta}\right).
	\end{equation}
	Now we combine \eqref{eq:ub0}, \eqref{eq:ub1}, and \eqref{eq:ub2}
	to deduce the following:
	\begin{equation}\begin{split}
		&\sum_{N=1}^\infty
			\P\left(\max_{1\le i\le N}\sum_{j=0}^{N-1}\1_{\{S(i,j)\ge 0~,~S(i,j+1)\le 0\}}
			\ge 2N^{(1/2)+\delta} ~;~ \mathcal{E}_N\right)\\
		&\qquad\le\sum_{N=1}^\infty\sum_{i=1}^N
			\P\left( \sum_{j=0}^{N-1}\1_{\{S(i,j)\ge 0~,~S(i,j+1)\le 0\}}
			\ge 2N^{(1/2)+\delta} ~;~ \mathcal{E}_N\right)\\
		&\qquad\le \sum_{N=1}^\infty \left( c_6N^{-3}
			+ c_7N^{-3}+2N\exp(-N^{2\delta})
			\right)\\
		&\qquad<\infty.
	\end{split}\end{equation}
	This and \eqref{eq:P(E)}, in turn, together imply that
	\begin{equation}
		\sum_{N=1}^\infty
		\P \left\{ \max_{1\le i\le N}
		\sum_{j=0}^{N-1}\1_{\{S(i,j)\ge 0~,~S(i,j+1)\le 0\}}
		\ge 2N^{(1/2)+\delta} \right\} <\infty.
	\end{equation}
	Since $-S$ is another simple walk on $\Z$, it follows that
	\begin{equation}
		\sum_{N=1}^\infty
		\P \left\{ \max_{1\le i\le N}
		f(i\,;N)
		\ge 2N^{(1/2)+\delta} \right\} <\infty.
	\end{equation}
	The lemma follows the Borel--Cantelli lemma,
	because $\epsilon$, and hence $\delta$,
	can be made arbitrarily small.
\end{proof}
\newpage

Consider the following random set of times:
\begin{equation}
	\mathcal{H}_N(\alpha\,,\beta) := 
	\left\{ 1\le i\le N^{1-\alpha} :\ f(i\,;N)>N^{(1/2)-\beta} \right\}.
\end{equation}

\begin{lemma}\label{lem:H}
	Choose and fix three positive constants $\alpha,\beta,\epsilon$
	such that $\beta>(\alpha/2)+\epsilon$. Then, the following happens
	a.s.: For all but a finite number of values of $N$,
	\begin{equation}
		\left| \mathcal{H}_N(\alpha\,,\beta)  \right|\ge N^{1 -(3\alpha/2)-2\epsilon},
	\end{equation}
	where $|\cdots|$ denotes cardinality.
\end{lemma}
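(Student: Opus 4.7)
The idea is a pigeonhole decomposition carried out on a \emph{smaller} sub-box $[1,M]^2$, where $M:=\lfloor N^{1-\alpha}\rfloor$. Set
$$\mathcal{A}_N := \left\{ 1\le i\le M : f(i\,;M) > N^{(1/2)-\beta} \right\},$$
and note that $f(i\,;M)\le f(i\,;N)$ (enlarging the time window cannot destroy an upcrossing), so $\mathcal{A}_N\subseteq \mathcal{H}_N(\alpha,\beta)$. It therefore suffices to produce the claimed lower bound on $|\mathcal{A}_N|$.

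I would then gather two already-available ingredients, each invoked along the divergent integer sub-sequence $M=M(N)\to\infty$. First, the KRS asymptotic \eqref{eq:KRS:05} applied to $M$ in place of $N$ gives, a.s.\ for all $N$ large,
$$Z(M)=\sum_{i=1}^M f(i\,;M) \ge M^{(3/2) - \epsilon/2} \ge N^{(3/2)-(3\alpha/2)-\epsilon/2}.$$
Second, Lemma~\ref{lem:unif:hit} applied to $M$ in place of $N$ gives, a.s.\ for all $N$ large,
$$\max_{1\le i\le M} f(i\,;M) \le M^{(1/2)+\epsilon/2}\le N^{((1-\alpha)/2)+\epsilon/2}.$$

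The pigeonhole step is the trivial decomposition
$$Z(M) \le |\mathcal{A}_N|\cdot \max_{1\le i\le M} f(i\,;M) + M\cdot N^{(1/2)-\beta} \le |\mathcal{A}_N|\,N^{((1-\alpha)/2)+\epsilon/2} + N^{(3/2)-\alpha-\beta}.$$
The hypothesis $\beta>(\alpha/2)+\epsilon$ is precisely what ensures that $(3/2)-\alpha-\beta < (3/2)-(3\alpha/2)-\epsilon/2$, so the leftover term $N^{(3/2)-\alpha-\beta}$ may be absorbed into half of the lower bound on $Z(M)$ once $N$ is large enough. Rearranging then yields
$$|\mathcal{A}_N|\ge \frac{1}{2}\,\frac{N^{(3/2)-(3\alpha/2)-\epsilon/2}}{N^{((1-\alpha)/2)+\epsilon/2}} = \frac{1}{2}\,N^{1-\alpha-\epsilon} \ge N^{1-(3\alpha/2)-2\epsilon}$$
for all $N$ large, which is the claim.

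\textbf{Main obstacle.} There is no genuinely hard step; the entire content is the exponent arithmetic, set up so that the hypothesis $\beta>(\alpha/2)+\epsilon$ exactly balances (i) the KRS lower bound of order $N^{3/2-3\alpha/2}$ on $Z(M)$, (ii) the uniform pointwise upper bound of order $N^{(1-\alpha)/2}$ on $f(i\,;M)$, and (iii) the threshold $N^{1/2-\beta}$ defining $\mathcal{A}_N$. The one mild formality is that \eqref{eq:KRS:05} and Lemma~\ref{lem:unif:hit} are applied along $M(N)$ rather than $N$ itself; since $M(N)\to\infty$ and both statements hold a.s.\ for \emph{all} sufficiently large values of their respective indices, this is legitimate.
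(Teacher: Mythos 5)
Your proof is correct and uses essentially the same argument as the paper: a pigeonhole decomposition of $Z(M)$ with $M=\lfloor N^{1-\alpha}\rfloor$, with the lower bound on $Z(M)$ supplied by \eqref{eq:KRS:05} and the uniform upper bound supplied by Lemma~\ref{lem:unif:hit}. The only (harmless) variation is that you keep the upcrossing counts at scale $M$ throughout and then use the inclusion $\mathcal{A}_N\subseteq\mathcal{H}_N(\alpha,\beta)$, whereas the paper first passes from $f(i;N^{1-\alpha})$ to $f(i;N)$ and applies the uniform bound at scale $N$; your bookkeeping actually yields the slightly sharper exponent $N^{1-\alpha-\epsilon}$, which of course still implies the stated bound.
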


\begin{proof}
	
	We apply \eqref{eq:KRS:05}, via \eqref{fZ}
	and Lemma \ref{lem:unif:hit},
	to see that with probability one,  the following holds
	for all but a finite number of values of $N$:
	\begin{equation}\begin{split}
		N^{(3(1-\alpha)/2)-\epsilon} &= \sum_{1\le i\le N^{1-\alpha}} f(i\,;N^{1-\alpha})\\
		&\le \sum_{1\le i\le N^{1-\alpha}}f(i\,;N)\\
		&=\sum_{i\in\mathcal{H}_N(\alpha,\beta)} f(i\,;N)
			+\sum_{\substack{1\le i\le N^{1-\alpha}:\\
			f(i,N)\le N^{(1/2)-\beta}}} f(i\,;N)\\
		&\le \left|\mathcal{H}_N(\alpha\,,\beta)\right|\cdot N^{(1/2)+\epsilon}
			+ N^{1-\alpha+{(1/2)-\beta}}.
	\end{split}\end{equation}
	The lemma follows because $\beta>(\alpha/2)+\epsilon$.
\end{proof}

Define
\begin{equation}
	U(i\,;\ell) := \mathbf{1}_{\{ S(i,\rho_\ell(i))S(i,1+\rho_\ell(i))=0\}}.
\end{equation}

The following is a key estimate in our proof of Theorem \ref{th:zeros}.

\begin{proposition}\label{pr:Bernstein}
	There exists a finite constant $c>0$ such that 
	for all integers $i,M\ge 1$,
	\begin{equation}
		\P\left\{ \sum_{\ell=1}^M U(i\,;\ell) \le \frac{cM}{i^{1/2}}\right\}
		\le \exp\left( -\frac{cM}{4i^{1/2}}\right).
	\end{equation}
\end{proposition}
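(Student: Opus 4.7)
The strategy is to establish a uniform conditional lower bound $\P(U(i;\ell)=1\mid\sG_{T_{\ell-1}})\ge c_0/\sqrt i$ for a universal $c_0>0$ (with $T_\ell:=\rho_\ell(i)+1$ and $\sG_j:=\sigma(S(i,0),\ldots,S(i,j))$, $S(i,0):=0$), and then to obtain the deviation bound by an MGF/Chernoff argument. The argument leans on the strong Markov property for the row walk $Y_j:=S(i,j)$---whose i.i.d.\ increments are distributed as $W_i$---together with a local-CLT estimate.

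The analytic input is the following: for every integer $a\ge 0$ in the support of $W_i$ with $a\le i$,
\[
	\frac{\P\{W_i=a\}}{\P\{W_i\ge a\}} \ge \frac{c_0}{\sqrt i}.
\]
For $a=O(\sqrt i)$ this is immediate from \eqref{p}, since the numerator is of order $i^{-1/2}$ and the denominator is $O(1)$. For $a\gg\sqrt i$, \eqref{p} combined with the Gaussian Mills-ratio estimate $\P\{W_i\ge a\}\le C(\sqrt i/a)\P\{W_i=a\}$ gives a ratio of order $a/\sqrt i$, which is only larger; at the endpoint $a=i$ both sides equal $2^{-i}$.

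For the probabilistic step, each $T_\ell$ is a $(\sG_\bullet)$-stopping time, so by strong Markov, conditional on $\sG_{T_{\ell-1}}$ with $Y_{T_{\ell-1}}=y$ the post-$T_{\ell-1}$ trajectory is an independent copy of $Y$ started at $y$. If $y=0$ then $\rho_\ell=T_{\ell-1}$ and $U(i;\ell)=1$ deterministically. For $y>0$ (the case $y<0$ being symmetric) and any $k\ge T_{\ell-1}$ and $a>0$, the Markov property at time $k$ factorizes
\[
	\P(\rho_\ell=k,\,Y_k=a,\,Y_{k+1}=0\mid y)=\P\bigl(\text{path }>0\text{ on }[T_{\ell-1},k],\,Y_k=a\mid y\bigr)\cdot\P\{W_i=-a\},
\]
with the identical factorization for $\P(\rho_\ell=k,\,Y_k=a\mid y)$ but with $\P\{W_i=-a\}$ replaced by $\P\{W_i\le -a\}$. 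The ratio $\P\{W_i=-a\}/\P\{W_i\le -a\}=\P\{W_i=a\}/\P\{W_i\ge a\}$ is bounded below by $c_0/\sqrt i$ by the analytic step, so summing over $k$ and $a$ (and using a.s.\ finiteness of $\rho_\ell$ by recurrence) gives $\P(U(i;\ell)=1\mid\sG_{T_{\ell-1}},Y_{T_{\ell-1}}=y)\ge c_0/\sqrt i$. When $i$ is odd the parity of $y$ can force $\P\{W_i=-a\}=0$ for every reachable $a$; in that case a single preliminary step keeps the walk positive with probability $\ge 1/2$ while flipping it into the complementary parity class, after which the preceding argument applies (at the cost of halving the constant).

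With $p:=c_0/\sqrt i$ in hand, the bound $\E[\e^{-\theta U(i;\ell)}\mid\sG_{T_{\ell-1}}]\le\exp(-p(1-\e^{-\theta}))$ holds for every $\theta>0$, and iterating gives
\[
	\E\Bigl[\exp\Bigl(-\theta\sum_{\ell=1}^M U(i;\ell)\Bigr)\Bigr]
	\le \exp\bigl(-Mp(1-\e^{-\theta})\bigr).
\]
Taking $\theta=\log 2$ and applying Markov's inequality at threshold $Mp/2$ yields $\P\{\sum_{\ell=1}^M U(i;\ell)\le Mp/2\}\le \exp(-(1-\log 2)Mp/2)\le\exp(-Mp/8)$, which is exactly the claim with $c:=c_0/2$. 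The main obstacle is the conditional lower bound: because $\rho_\ell$ itself is not a $(\sG_\bullet)$-stopping time, one must work with $T_\ell$ and carry the pointwise local estimate through the path decomposition above, and handle the parity subtlety when $i$ is odd; once this is in place, the Chernoff step is entirely routine.
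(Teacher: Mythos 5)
Your strategy parallels the paper's: a conditional lower bound $\P(U(i;\ell)=1\mid\cdot)\ge c_0\,i^{-1/2}$ obtained from the local-limit ratio estimate $\P\{W_i=a\}/\P\{W_i\ge a\}\ge c_0\,i^{-1/2}$ (the content of the paper's Lemma~\ref{lem:LD}), followed by a concentration step (the paper invokes a Bernstein inequality from \cite[Lemma~3.9]{KRS:04}; your MGF/Chernoff computation is an equivalent hands-on substitute with comparable constants). Conditioning on $\sG_{T_{\ell-1}}$ with $T_\ell:=\rho_\ell(i)+1$ and summing over the location and height of the next crossing is a more careful variant of the paper's conditioning on $\mathcal{F}(i;\ell)$; you are right that $\rho_\ell$ is not a $\sG_\bullet$-stopping time, and the path factorization you write down is correct.

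The parity discussion is the crux, and it is where your proof has a gap. You assert that ``when $i$ is odd the parity of $y$ can force $\P\{W_i=-a\}=0$ for every reachable $a$'' and offer a one-step fix. But for odd $i$ the row walk $Y_j=S(i,j)$ alternates parity, $Y_j\equiv j\pmod 2$, so as $k$ ranges over $T_{\ell-1},T_{\ell-1}+1,\dots$ the reachable values $a=Y_k>0$ take \emph{both} parities: what matters is the parity of the random crossing time $\rho_\ell$, not of $y$. On the even-$k$ terms (for odd $i$) the numerator $\P\{W_i=-a\}$ vanishes while the denominator $\P\{W_i\le -a\}$ does not, so summing your term-by-term ratio bound yields only
\begin{equation*}
	\P\bigl(U(i;\ell)=1\mid y\bigr)\;\ge\;\frac{c_0}{\sqrt i}\,\P\bigl(\rho_\ell\text{ is odd}\;\big|\;y\bigr),
\end{equation*}
and the nontrivial missing step is to show that the second factor is bounded below uniformly in $y$ and in odd $i$. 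Your preliminary step does not supply this: after one step the walk sits at the complementary parity, but the subsequent crossing time $\rho_\ell$ is still random and still lands on either parity, so the same loss recurs unchanged. For what it is worth, the paper has the same soft spot---Lemma~\ref{lem:LD1} as stated fails for odd $i$ on the event $\{\rho_\ell(i)\text{ even},\,S(i,\rho_\ell(i))\neq 0\}$, where the conditional probability is identically zero---so the subtlety you flagged is real; but closing it requires an honest lower bound on $\P(\rho_\ell\text{ odd}\mid\cdot)$, e.g.\ via ladder-height/overshoot estimates for the row walk, not a one-step reduction.
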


Our proof of Proposition \ref{pr:Bernstein}
begins with an estimate for the simple walk.

\begin{lemma}\label{lem:LD}
	There exists a constant $K$ such that for
	all $n\ge 1$ and positive even integers $x\le 2n$,
	\begin{equation}
		\P\left( \left. W_{2n} = x\  \right|\, W_{2n} \ge x\right) \ge \frac{K}{n^{1/2}}.
	\end{equation}
\end{lemma}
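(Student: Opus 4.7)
The plan is to reduce the conditional probability to a direct comparison between the point mass $\P(W_{2n}=x)$ and the tail $\P(W_{2n}\geq x)$, exploiting the monotonicity of ratios of consecutive binomials. Writing $a:=x/2$ and using $\P(W_{2n}=y)=\binom{2n}{n+y/2}2^{-2n}$ for even $y$, the ratio
\begin{equation*}
r_j \;:=\; \frac{\P(W_{2n}=x+2j+2)}{\P(W_{2n}=x+2j)} \;=\; \frac{n-a-j}{n+a+j+1}
\end{equation*}
is non-increasing in $j\ge 0$ and, for $x>0$, strictly less than $1$. Dominating the tail by a geometric series with ratio $r_0=(n-a)/(n+a+1)$ yields
\begin{equation*}
\P(W_{2n}\geq x) \;=\; \sum_{k\geq 0}\P(W_{2n}=x+2k) \;\leq\; \frac{\P(W_{2n}=x)}{1-r_0} \;=\; \P(W_{2n}=x)\cdot\frac{n+a+1}{2a+1}.
\end{equation*}
Rearranging gives the pointwise lower bound
\begin{equation*}
\P\bigl(W_{2n}=x \,\bigm|\, W_{2n}\geq x\bigr) \;\geq\; \frac{2a+1}{n+a+1} \;=\; \frac{x+1}{n+\tfrac{x}{2}+1}.
\end{equation*}

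From here I would finish with a two-case analysis. In the \emph{large-$x$ regime} $x\geq\sqrt{n}$ (but $x\leq 2n$), the right-hand side is at least $\sqrt{n}/(2n+1)\geq 1/(3\sqrt{n})$, which already gives the desired bound. In the \emph{small-$x$ regime} $x<\sqrt{n}$, the above ratio is too weak; instead I would use that $\P(W_{2n}\geq x)\leq 1$ and appeal to Stirling's formula (or equivalently the expansion \eqref{p} together with a reflection argument) to obtain $\P(W_{2n}=x)\geq c\,n^{-1/2}e^{-x^2/(4n)}\geq c\,e^{-1/4}n^{-1/2}$. Combining the two cases gives the lemma with $K:=\min\!\bigl(1/3,\,c\,e^{-1/4}\bigr)$.

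The only real subtlety is the transition between the two regimes; luckily both produce lower bounds of the same order $n^{-1/2}$, so no sharper interpolation is needed. The geometric-ratio step is the key idea — it replaces a potentially delicate uniform large-deviation estimate near $x\approx 2n$ by a one-line combinatorial inequality, which is what makes the small- and large-$x$ cases match up cleanly.
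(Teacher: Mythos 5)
Your argument is correct, but it takes a genuinely different route from the paper. The paper decomposes by the first passage time $\nu(x)=\min\{j:W_{2j}=x\}$, applies the strong Markov property to reduce $\P(W_{2n}=x\mid\nu(x)=2j)$ to $p(n-j)$, and then uses the uniform lower bound $p(m)\gtrsim n^{-1/2}$ for $m\le n$ together with the inclusion $\{W_{2n}\ge x\}\subseteq\{\nu(x)\le 2n\}$. Your proof instead works entirely with the one-time marginal: the geometric-ratio bound $\P(W_{2n}\ge x)\le\P(W_{2n}=x)\cdot\frac{n+a+1}{2a+1}$ is a clean combinatorial inequality that avoids any path decomposition, and it handles the regime $x\gtrsim\sqrt n$ cleanly. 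The trade-off is that you then need a second, unrelated estimate (a local-CLT lower bound on $\P(W_{2n}=x)$) to cover $x\lesssim\sqrt n$, whereas the paper's strong-Markov argument handles all $x$ in one stroke because the constant from $p(n-j)$ is already uniform. Both approaches are elementary and of comparable length.

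One small caution: the phrase ``\eqref{p} together with a reflection argument'' in your small-$x$ step is off --- reflection is not the relevant tool for bounding $\P(W_{2n}=x)$ from below. What you actually want is either Stirling applied directly to $\binom{2n}{n+x/2}2^{-2n}$, or the elementary ratio bound
\begin{equation*}
\frac{\P(W_{2n}=x)}{\P(W_{2n}=0)}=\prod_{k=1}^{x/2}\frac{n-k+1}{n+k}\ \ge\ \Bigl(\frac{n-x/2}{n+x/2}\Bigr)^{x/2}\ \ge\ e^{-1}
\quad\text{for }x\le\sqrt n,
\end{equation*}
combined with $\P(W_{2n}=0)=p(n)\gtrsim n^{-1/2}$ from \eqref{p}. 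With that substitution the proof is complete and correct.
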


\begin{proof}
	Let $\mathcal{P}_n(x)$ denote the conditional probability in the
	statement of the lemma. 
	Define the stopping times $\nu(x) := \min\{j\ge 1:\, W_{2j}=x\}$, and write
	\begin{equation}
		\mathcal{P}_n(x)  
		=\sum_{j=x/2}^n \frac{\P\left(\left. W_{2n} = x \ 
		\right|\, \nu(x)=2j\right)\cdot \P\{ \nu(x)=2j\}}{\P\{W_{2n}\ge x\}}.
	\end{equation}
	We first recall \eqref{eq:p}, and then apply
	the strong markov property to obtain
	$\P(W_{2n} = x \,|\, \nu(x)=2j ) = p(n-j)$.
	Thanks to \eqref{p}, we can find two constants $K_1$
	and $K_2$ such that $p(n-j)\ge K_1(n-j)^{-1/2}
	\ge K_1n^{-1/2}$ if $n-j\ge K_2$. On the other hand,
	if $n-j<K_2$, then $p(n-j)\ge K_3\ge K_3n^{-1/2}$.
	Consequently, 
	\begin{equation}\begin{split}
		\mathcal{P}_n(x) &\ge \frac{K_4}{n^{1/2} \P\{W_{2n}\ge x\}}
			\cdot \sum_{j=x/2}^n  \P\{\nu(x)=2j\}\\
		&= \frac{K_4}{n^{1/2}}
			\cdot \frac{\P\{\nu(x)\le 2n\}}{\P\{W_{2n}\ge x\}},
	\end{split}\end{equation}
	and this last quantity is at least
	$K_4 n^{-1/2}$ since $\{\nu(x)\le 2n\}\supseteq\{W_{2n}\ge x\}$.
\end{proof}

Here and throughout, let $\mathcal{F}(i\,;\ell)$ denote the $\sigma$-algebra
generated by the random variables $\{\rho_i(j)\}_{j=1}^\ell$
and $\{S(i\,,m)\}_{m=1}^{\rho_i(\ell)}$ [interpreted in the usual way,
since $\rho_i(\ell)$ is a stopping time for the infinite-dimensional
walk $i\mapsto S(i\,,\bullet)$]. Then we have the following.

\begin{lemma}\label{lem:LD1}
	For all $i,\ell\ge 1$,
	\begin{equation}\label{eq:LD1}
		\P\left(\left. S(i\,,1+\rho_\ell(i))=0\ \right|\,
		\mathcal{F}(i\,;\ell)\right) \ge \frac{K}{i^{1/2}},
	\end{equation}
	where $K$ was defined in Lemma \ref{lem:LD}.
\end{lemma}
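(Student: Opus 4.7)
The plan is to reduce the left side of \eqref{eq:LD1} to a conditional probability of the form controlled by Lemma \ref{lem:LD}. Writing $m:=\rho_\ell(i)$ and
\[
W := \sum_{k=1}^{i} X_{k,m+1},
\]
we have $S(i,m+1)=S(i,m)+W$, and $W$ is a sum of $i$ i.i.d.\ Rademacher variables. The first step is to identify the conditional law of $W$ given $\mathcal{F}(i;\ell)$. Set $x:=S(i,m)$, which is $\mathcal{F}(i;\ell)$-measurable. The key structural claim I want to establish is that, conditionally on $\mathcal{F}(i;\ell)$, $W$ is distributed as an unconstrained $i$-fold Rademacher sum subject only to the single sign restriction encoded by the crossing condition $S(i,m)\,S(i,m+1)\le 0$, i.e., to $\{W\le -x\}$ when $x>0$ and to $\{W\ge -x\}$ when $x<0$.

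Granting this claim, the main step is a direct application of Lemma \ref{lem:LD}. On $\{x>0\}$,
\[
\P\bigl(S(i,m+1)=0 \,\big|\, \mathcal{F}(i;\ell)\bigr)
= \P\bigl(W=-x \,\big|\, W\le -x\bigr)
= \P\bigl(W=x \,\big|\, W\ge x\bigr),
\]
where the last equality uses the symmetry $W\stackrel{d}{=}-W$. Taking $2n=i$ in Lemma \ref{lem:LD} yields a lower bound of order $i^{-1/2}$; the hypothesis $x\le i$ there comes for free, since on $\{W\le -x\}$ one has $x\le -W\le i$. The case $x<0$ is symmetric (apply the argument to $-\mathbf{S}$), and when $x=0$ the target probability reduces to $\P(W=0)=p(i/2)$, which \eqref{p} bounds below by a constant multiple of $i^{-1/2}$.

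The main obstacle is establishing the structural claim about the conditional law of $W$. The subtlety is that $\mathcal{F}(i;\ell)$ contains the stopping time $\rho_\ell(i)$ itself, which is not a function of $\{X_{k,j}:k\le i,\,j\le m\}$ alone; its value also depends on the sign of $S(i,m+1)$, hence on $W$. The clean way to handle this is to condition additionally on $\{\rho_\ell(i)=m\}$ and sum over $m$: on this event, $\rho_1(i),\ldots,\rho_{\ell-1}(i)$ and $S(i,1),\ldots,S(i,m)$ are all measurable with respect to $\sigma(X_{k,j}:k\le i,\,j\le m)$, which is independent of $W$; so the only way in which $\mathcal{F}(i;\ell)\cap\{\rho_\ell(i)=m\}$ depends on $W$ is through the single indicator $\mathbf{1}_{\{S(i,m)\,S(i,m+1)\le 0\}}$. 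A short test-function computation then converts this into the claimed conditional law, after which the rest of the argument is routine.
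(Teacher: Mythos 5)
Your argument follows the same route as the paper---reduce the left side of \eqref{eq:LD1} to a conditional probability of the form $\P(W=x\mid W\ge x)$ for an $i$-step increment, then invoke Lemma \ref{lem:LD}---but you are noticeably more careful about the one delicate step, namely identifying the conditional law of the increment given $\mathcal{F}(i;\ell)$. The paper dispatches this with an appeal to ``the strong Markov property of the infinite-dimensional random walk,'' which is not quite right as stated: $\rho_\ell(i)$ is \emph{not} a stopping time for the filtration of $j\mapsto S(i,j)$, since the event $\{\rho_\ell(i)=m\}$ depends on $S(i,m+1)$. You correctly flag this, and your fix---condition on $\{\rho_\ell(i)=m\}$, note that the only dependence of the conditioning event on $W=\sum_{k\le i}X_{k,m+1}$ is through the single crossing indicator $\mathbf{1}_{\{S(i,m)S(i,m+1)\le 0\}}$, and read off the conditioned Rademacher law of $W$---is the right way to make the step rigorous. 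In effect you supply a justification that the paper only gestures at.

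One small point, which affects the paper as much as your write-up: the case $\xi=0$. Your observation that $\P(W=0)=p(i/2)\asymp i^{-1/2}$ implicitly assumes $i$ is even; if $i$ is odd and $\rho_\ell(i)$ is even, then $S(i,1+\rho_\ell(i))$ has odd parity, the conditional probability in \eqref{eq:LD1} is identically zero, and the stated inequality fails on an event of positive probability. (The paper treats only $\{\xi>0\}$ and $\{\xi<0\}$ and is silent on $\{\xi=0\}$.) This costs nothing downstream: in Proposition \ref{pr:Bernstein} the lemma is used to bound $\E(U(i;\ell)\mid\mathcal{F}(i;\ell))$ from below, and $U(i;\ell)=1$ automatically on $\{S(i,\rho_\ell(i))=0\}$. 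Likewise, Lemma \ref{lem:LD} as written applies only to even sums; for odd $i$ one needs the verbatim odd-parity analogue.
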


\begin{proof}
	Let $\xi:=-S(i\,,\rho_\ell(i))$, for simplicity. According to the definition of
	the $\rho_\ell(i)$'s,
	\begin{equation}
		S(i\,,1+\rho_\ell(i)) \ge 0\qquad\text{almost surely on $\{\xi> 0\}$}.
	\end{equation}
	Consequently,
	\begin{equation}
		\Delta_{i,\ell}:=S(i\,,1+\rho_\ell(i)) - S(i\,,\rho_\ell(i))\ge \xi
		\qquad\text{almost surely on $\{\xi> 0\}$.}
	\end{equation}
	Clearly, the strong markov property of the infinite dimensional random walk
	$i\mapsto S(i\,;\bullet)$ implies that with probability one,
	\begin{equation}\begin{split}
		\P\left(\left. S(i\,,1+\rho_\ell(i))=0\ \right|\,
			\mathcal{F}(i\,;\ell)\right)
			&=\P\left(\left. \Delta_{i,\ell} = \xi \ \right|\,
			\mathcal{F}(i\,;\ell)\right)\\
		&\ge \P\left(\left. \Delta_{i,\ell}=\xi \ \right|\,
			\Delta_{i,\ell}\ge\xi \,;\,\xi\right)\mathbf{1}_{\{\xi>0\}}.
	\end{split}\end{equation}
	Therefore, we can apply Lemma \ref{lem:LD} together with  to deduce
	that \eqref{eq:LD1} holds a.s.\ on $\{\xi> 0\}$.
	Similar reasoning shows that the very same bound holds also a.s.\ on $\{\xi<0\}$.
\end{proof}

We are ready to derive Proposition \ref{pr:Bernstein}.

\begin{proof}[Proof of Proposition \ref{pr:Bernstein}]
	We recall the following form of Bernstein's inequality,
	as found, for example, in \cite[Lemma 3.9]{KRS:04}:
	{\it Suppose $J_1,\ldots,J_n$ are random variables, on
	a common probability space, that take values
	zero and one only. If there exists a nonrandom
	$\eta>0$ such that
	$\E (J_{k+1}\,|\, J_1\,,\ldots,J_k) \ge \eta$ for all $k=1,\ldots,n-1$. Then,
	that for all $\lambda\in(0\,,\eta)$,
	\begin{equation}\label{eq:Bernstein}
		\P\left\{ \sum_{i=1}^n J_i \le \lambda n\right\}
		\le \exp\left( - \frac{n(\eta-\lambda)^2}{2\eta}\right).
	\end{equation}
	}
	We apply the preceding with $J_\ell := U(i\,;\ell)$; Lemma
	\ref{lem:LD1} tells us that we can use 
	\eqref{eq:Bernstein} with $\eta:= Ki^{-1/2}$ and $\lambda:=\eta/2$ to 
	deduce the Proposition with $c:=K/2$.
\end{proof}

\begin{lemma}\label{lem:AN}
	Choose and fix two constants $a,b>0$ such that $1>a>2b$. 
	Then with probability one,
	\begin{equation}
		\min_{1\le i\le N^{1-a}} \sum_{1\le \ell\le N^\beta}
		U(i\,;\ell) \ge cN^{(a/2)-b},
	\end{equation}
	for all $N$ sufficiently large,
	where $c$ is the constant in Proposition \ref{pr:Bernstein}.
\end{lemma}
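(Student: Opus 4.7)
The plan is to apply Proposition \ref{pr:Bernstein} separately to each row $i\in\{1,\ldots,\lceil N^{1-a}\rceil\}$, take a union bound, and then invoke the Borel--Cantelli lemma directly in $N$. The exponential tail in Proposition \ref{pr:Bernstein} is strong enough that no thinning to a geometric subsequence (as in the proof of Theorem \ref{th:diag}) is needed.

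First, for each fixed $i$ with $1\le i\le N^{1-a}$, and taking the upper summation limit to be $M=\lceil N^{(1/2)-b}\rceil$ (this is the value the arithmetic forces, as explained below), Proposition \ref{pr:Bernstein} gives
\begin{equation*}
\P\left\{\sum_{\ell=1}^{M} U(i;\ell)\le \frac{cM}{i^{1/2}}\right\}\le \exp\left(-\frac{cM}{4i^{1/2}}\right).
\end{equation*}
Because $i\le N^{1-a}$ implies $i^{1/2}\le N^{(1-a)/2}$, the lower bound satisfies $cM/i^{1/2}\ge cM/N^{(1-a)/2}\ge cN^{(a/2)-b}$, and the per-$i$ failure probability is at most $\exp(-(c/4)N^{(a/2)-b})$. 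The choice $M=\lceil N^{(1/2)-b}\rceil$ is exactly the one that aligns this worst-case bound with the target $cN^{(a/2)-b}$ at the most unfavourable endpoint $i=N^{1-a}$.

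Next, a union bound over the $\lceil N^{1-a}\rceil$ rows yields
\begin{equation*}
\P\left\{\min_{1\le i\le N^{1-a}}\sum_{\ell=1}^{M}U(i;\ell)<cN^{(a/2)-b}\right\}\le N^{1-a}\exp\!\left(-\frac{c}{4}N^{(a/2)-b}\right).
\end{equation*}
The hypothesis $1>a>2b$ makes $(a/2)-b$ strictly positive, so the right-hand side decays faster than any polynomial in $N$ and is in particular summable; Borel--Cantelli then delivers the almost-sure conclusion.

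The only real obstacle is calibrating $M$ correctly: the requirement $cM/N^{(1-a)/2}\ge cN^{(a/2)-b}$ forces $M\ge N^{(1/2)-b}$, and the condition $a>2b$ is precisely what then makes the failure exponent $(a/2)-b$ positive, so that the union-bounded probability is summable rather than blowing up. After this calibration the argument is routine: it is just Proposition \ref{pr:Bernstein} combined with a union bound and Borel--Cantelli, with no independence or martingale input across rows required.
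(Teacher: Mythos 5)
Your proof is correct and matches the paper's argument essentially line for line: take $M=N^{(1/2)-b}$, observe that $i\le N^{1-a}$ forces $cM/i^{1/2}\ge cN^{(a/2)-b}$, apply Proposition \ref{pr:Bernstein} and a union bound over $i\le N^{1-a}$, note the resulting bound $N^{1-a}\exp(-(c/4)N^{(a/2)-b})$ is summable because $a>2b$, and finish with Borel--Cantelli. (Your reading of the summation range as $N^{(1/2)-b}$ rather than the statement's $N^\beta$ is the right one; the latter appears to be a typo, as the paper's own proof uses $N^{(1/2)-b}$.)
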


\begin{proof}
	Proposition \ref{pr:Bernstein} tells us that
	\begin{align}\nonumber
	&\P\left\{ \min_{1\le i\le N^{1-a}}\sum_{1\le\ell\le N^{(1/2)-b}} 
		U(i\,;\ell) 
		\le cN^{(a/2)-b} \right\}\\\nonumber
	& \hskip1in\le\P\left\{ \sum_{1\le\ell\le N^{(1/2)-b}} U(i\,;\ell) 
		\le \frac{cN^{(1/2)-b}}{i^{1/2}}\text{ for some
		$i\le N^{1-a}$}
		\right\}\\\nonumber
	&\hskip1in\le\sum_{1\le i\le N^{1-a}} \exp\left( -\frac{
		cN^{(1/2)-b}}{4 i^{1/2}}\right)\\
	&\hskip1in \le N^{1-a}\exp\left( -\frac{
		cN^{(a/2)-b}}{4}\right).
	\end{align}
	An application of the Borel--Cantelli lemma finishes the proof.
\end{proof}

We are ready to complete the proof of our first theorem.

\begin{proof}[Proof of Theorem \ref{th:zeros}: Second Half]
	Let us begin by choosing and fixing a small constant $\epsilon\in(0\,,1/2)$.
	Next, we choose and fix two more constants $a$ and $b$ such that
	\begin{equation}\label{ab}
		b\in(0\,,1/2)\quad\text{and}\quad
		a\in(2b\,,1).
	\end{equation}
	Finally, we choose and fix yet two more constants $\epsilon$ and $\alpha$
	such that
	\begin{equation}\label{eab}
		\alpha\in(a\,,1),\quad
		\beta\in\left(\frac\alpha 2+\epsilon~,~ b\right),\text{ and}\quad
		\frac{3\alpha}{2}-\frac a2 +b\le\epsilon.
	\end{equation}
	It is possible to verify that we can pick such $a$, $b$, $\alpha$,
	and $\beta$, regardless of how small $\epsilon$ is.
	
	Because $\alpha\in(a\,,1)$,
	\begin{equation}\begin{split}
		&\bigcap_{1\le i\le N^{1-a}} \left\{
			\sum_{1\le\ell\le N^{(1/2)-b}} U(i\,;\ell) >
			cN^{(a/2)-b} \right\}\\
		&\hskip.5in\subseteq \left\{
			\sum_{i\in\mathcal{H}_N(\alpha,\beta)}\sum_{1\le\ell\le N^{(1/2)-b}}
			U(i\,;\ell)\ge c N^{(a/2)-b}\left|
			\mathcal{H}_N(\alpha\,,\beta)\right|
			\right\}.
	\end{split}\end{equation}
	According to Lemma \ref{lem:H}, and since $\beta>(\alpha/2)+\epsilon$,
	$|\mathcal{H}_N(\alpha\,,\beta)|$ is at least
	$N^{1-(3\alpha/2)-2\epsilon}$, for all $N$ large.
	The preceding and Lemma \ref{lem:AN} together imply that
	with probability one,
	\begin{equation}\label{eq:U:LB}
		\sum_{i\in\mathcal{H}_N(\alpha,\beta)}\sum_{1\le\ell\le N^{(1/2)-b}}
		U(i\,;\ell) \ge c N^{1-(3\alpha/2)+(a/2)-b-2\epsilon},
	\end{equation}
	for all $N$ sufficiently large. Consequently, the following holds almost
	surely: For all but a finite number of values of $N$,
	\begin{equation}\begin{split}
		\gamma_N &= \sum_{i=1}^N\sum_{\ell=1}^{f(i,N)}
			U(i\,;\ell)\\
		&\ge\sum_{i\in\mathcal{H}_N(\alpha,\beta)}\sum_{1\le \ell\le N^{(1/2)-\beta}}
			U(i\,;\ell).
	\end{split}\end{equation}
	Since $\beta<b$, \eqref{eq:U:LB} implies that with probability one,
	the following holds for all but finitely-many values of $N$:
	\begin{equation}
		\gamma_N \ge c N^{1-(3\alpha/2)+(a/2)-b-2\epsilon},
	\end{equation}
	which is $\ge cN^{1-2\epsilon}$,
	thanks to the last condition of \eqref{eab}.
	Since $\epsilon$ is arbitrary, this completes our proof.
\end{proof}

\section{Questions on the distribution of zeros}
We conclude this paper by asking a few open
questions:

\begin{enumerate}
\item Let us call a point $(i\,,j)\in\Z^2_+$ \emph{even} if $ij$ is even.
	Define $Q_N$ to be the largest square in $[0\,,N]^2$
	such that $S(i\,,j)=0$ for every even point $(i\,,j)$ in $Q_N$.
	What is the asymptotic size of the cardinality of
	$Q_N\cap \Z^2$, as $N\to\infty$ along even integers?
	The following shows that this is a subtle question:
	One can similarly define $\tilde{Q}_N$ to
	be the largest square in $[0\,,N]^2$---with one vertex equal
	to $(N\,,N)$--such that $S(i\,,j)=0$ for all even
	$(i\,,j)\in \tilde{Q}_N$. [Of course, $N$ has to be even
	in this case.] In the present case, we estimate the size
	of $\tilde{Q}_N$ by first observing that if $N$ is even, then
	\begin{equation}\begin{split}
		&\P\left\{ S(N\,,N)=S(N+2\,,N+2)=0\right\}\\
		&= \P\left\{ S(N\,,N)=0\right\}\cdot
				\P\left\{ S(N+2\,,N+2)-S(N\,,N)=0\right\}\\
		&=(\text{const}+o(1)) N^{-3/2}\quad\text{as $N\to\infty$ along evens}.
	\end{split}\end{equation}
	Since the preceding defines a summable sequence, the Borel--Cantelli
	lemma tells us that $\#\tilde{Q}_N\le1 $ for all sufficiently-large
	even integers $N$.
\item Consider the number 
	$D_N := \sum_{i=1}^N \1_{\{S(i,N-i)=0\}}$
	of ``anti-diagonal'' zeros.
	It it the case that with probability one,
	\begin{equation}
		0<\limsup_{N\to\infty}\frac{\log D_N}{\log\log N}<\infty?
	\end{equation}
	At present, we can prove that
	$D_N\le (\log N)^{1+o(1)}$.

\item The preceding complements the following, which is not very hard to prove:
	\begin{equation}\label{eq:DN}
		\liminf_{N\to\infty} D_N=0\qquad\text{almost surely}.
	\end{equation}
	Here is the proof: According to the local central limit theorem,
	and after a line or two of computation,
	$\lim_{N\to\infty}\E (D_{2N})= (\pi/8)^{1/2}$.
	Therefore,  by Fatou's lemma,
	$\liminf_{N\to\infty}D_{2N}\le (\pi/8)^{1/2}<1$ with positive probability,
	whence almost surely by the Kolmogorov zero-one law [applied to the
	sequence-valued random walk $\{S(i\,,\bullet)\}_{i=1}^\infty$];
	\eqref{eq:DN} follows because $D_N$ is integer valued. We end by proposing 
	a final question related to
	\eqref{eq:DN}: Let $\{\mathcal{S}(s\,,t)\}_{s,t\ge 0}$ denote two-parameter
	Brownian sheet; that is, $\mathcal{S}$ is a centered gaussian process with continuous
	sample functions, and $\E[\mathcal{S}(s\,,t)\mathcal{S}(u\,,v)]=\min(s\,,u)\min(t\,,v)$
	for all $s,t,u,v\ge 0$.
	
	Define ``anti-diagonal local times,''
	\begin{equation}
		\mathcal{D}_t:= \lim_{\epsilon\to 0}\frac{1}{2\epsilon}
		\int_0^t \1_{\{|\mathcal{S}(s,t-s)|\le\epsilon\}}\, \d s
		\qquad\text{for $t>0$}.
	\end{equation}
	\begin{enumerate}
		\item Does $\{\mathcal{D}_t\}_{t>0}$ exist? Is it continuous?
		\item Is it true that $\mathcal{Z}:=\{t>0:\, \mathcal{D}_t=0\}$ is
			almost surely nonempty? That is, does the continuum-limit
			analogue of \eqref{eq:DN} hold?
			If $\mathcal{Z}$ is nonempty, then what is its Hausdorff dimension?
		\end{enumerate}
\item For all $\epsilon\in(0\,,1)$ and integers $N\ge 1$ define
	\begin{equation}
		E(\epsilon\,,N):=\left\{(i\,,j)\in[\epsilon N\,,N]^2:\, S(i\,,j)=0\right\}.
	\end{equation}
	It is not hard to verify that if $\epsilon\in(0\,,1)$ is fixed, then
	$E(\epsilon\,,N)=\varnothing$ for infinitely-many $N\ge 1$. This is because there
	exists $p\in(0\,,1)$---independent of $N$---such that for all $N$ sufficiently
	large,
	\begin{equation}
		\P\left\{ S(\epsilon N\,,N)\ge 2N\,,
		\max_{\epsilon N\le i,j\le N}
		\left| S(i\,,j)- S(\epsilon N\,,N)\right|\le N\right\}>p.
	\end{equation}
	Is there a good way to characterize which positive sequences $\{\epsilon_k\}_{k=1}^\infty$,
	with $\lim_{k\to\infty}\epsilon_k=0$,
	have the property that $E(\epsilon_N\,,N)\neq\varnothing$ eventually?
\item Let $\gamma_N'$ denote the number of points $(i\,,j)\in[0\,,N]^2$
	such that $S(i\,,j)=1$. What can be said about $\gamma_N-\gamma_N'$?
\item A point $(i\,,j)$ is a \emph{twin zero} if it is even and there exists
	$(a\,,b)\in\mathbf{Z}_+^2$ such that: (i) $0<|i-a|+|j-b|\le 100$ [say]; and (ii)
	$S(a\,,b)=0$. Let $d(\epsilon\,,N)$ denote the number of twin zeros that
	lie in the following domain:
	\begin{equation}
		D(\epsilon\,,N):=\left\{(i\,,j)\in\mathbf{Z}^2_+:\,
		\epsilon i < j < i/\epsilon\,, 1<i<N\right\}.
	\end{equation}
	Is it true that $\lim_{N\to\infty}d(\epsilon\,,N)=\infty$ a.s.\
	for all $\epsilon\in(0\,,1)$?
\end{enumerate}

\noindent\textbf{Acknowledgements.} This project have benefitted from 
several enlightening conversations with Professor Zhan Shi, whom we
thank gratefully.

\begin{small}

\vskip.4cm
\begin{footnotesize}
\noindent\textbf{Davar Khoshnevisan.}
\noindent Department of Mathematics, University of Utah,
	144 S 1500 E,
	Salt Lake City, UT 84112-0090, United States\\\texttt{davar@math.utah.edu}\\

\noindent\textbf{P\'al R\'{e}v\'{e}sz.} Institut f\"{u}r Statistik und 
	Wahrscheinlichkeitstheorie, Technische Universit\"{a}t  Wien, 
	Wiedner Hauptstrasse 8-10/107 Vienna, Austria\\\texttt{reveszp@renyi.hu}
\end{footnotesize}

\end{small}

\end{document}